\newcommand{\commentout}[1]{}
\newcommand{\R}{\mathbb{R}}
\newcommand {\Chi} {{\bf \raise 2pt \hbox{$\chi$}} }
\newcommand {\F} { {\mathcal F} }
\newcommand {\f}   {\frac}
\newcommand{\beq}{\begin{equation}}
\newcommand{\beqa} {\begin{array}{rl}}
\newcommand{\eeq}{\end{equation}}
\newcommand{\eeqa}{\end{array}}
\newtheorem{theorem}{Theorem}[section]
\newtheorem{lemma}[theorem]{Lemma}
\newtheorem*{theoremEMRR}{Theorem \cite{EMRR05}}
\newcommand {\dx} {\,{\rm d}x}
\newcommand {\dy} {\,{\rm d}y}
\newcommand {\dz} {\,{\rm d}z}
\newcommand {\dmu} {\,{\rm d}\mu}
\title{\bf Exponential relaxation to self-similarity for the superquadratic fragmentation equation}
\author{P. Gabriel\thanks{University of Versailles St-Quentin-en-Yvelines, Laboratoire de Math\'ematiques de Versailles, CNRS UMR 8100, 45 Avenue de \'Etats-Unis, 78035~Versailles~cedex, France. Email: pierre.gabriel@uvsq.fr} \and F. Salvarani
\thanks{Dipartimento di Matematica F. Casorati, Universita degli Studi di Pavia, Via
Ferrata 1, 27100 Pavia, Italy. Email: francesco.salvarani@unipv.it}}
\date{}
\begin{document}
\maketitle
\pagestyle{plain}
\pagenumbering{arabic}

\begin{abstract}
We consider the self-similar fragmentation equation 
with a superquadratic fragmentation rate and provide a quantitative estimate of the spectral gap.
\end{abstract}

\noindent{\bf Keywords:}  fragmentation equation, self-similarity, exponential convergence, spectral gap, long-time behavior
\

\noindent{\bf AMS Class. No.} Primary: 35B40, 35P15, 45K05; Secondary: 82D60, 92D25

\

\section{Introduction}

The\emph{ fragmentation equation}
\beq\label{eq:frag}
  \left\{\begin{array}{ll}
    \partial_t\, f(t,x) = \F f(t,x),\qquad &t\geq0,\ x>0,
    \vspace{2mm}\\
    f(0,x) = f_\text{in}(x), \qquad &x > 0
  \end{array}\right.
\eeq
is a model that describes the time evolution of a population structured with respect to the size $x$ 
of the individuals.

The key term of the model is the fragmentation operator $\F$, defined as
\beq\label{eq:frag_operator} 
\F f (x):= \int_x^\infty b(y,x) f(y) \dy  - B(x) f(x).
\eeq
The fragmentation operator quantifies the generation of smaller individuals from a member of the population
of size $x$: the individuals split with a rate $B(x)$ and generate smaller individuals of size $y\in(0,x)$, whose
distribution is governed by the kernel $b(x,y)$.

Models involving the fragmentation operator appear in various applications.
Among them we can mention crushing of rocks, droplet breakup or combustion~\cite{Banasiak04} which are pure fragmentation phenomena, but also cell division~\cite{BP}, protein polymerization~\cite{GabrielPhD} or data transmission protocols on the web \cite{BCGMZ13}, for which the fragmentation process occurs together with some ``growth'' phenomenon.

In order to ensure the conservation of the total mass of particles which may occur during the fragmentation process, the coefficients $B(x)$ and $b(y,x)$ must be linked through the relation
\beq
\label{as:mass_cons}
\int_0^y xb(y,x)\dx=yB(y).
\eeq
This assumption ensures, at least formally, the mass conservation law
\beq
\label{eq:mass_cons}
\forall t>0,\qquad \int_0^\infty xf(t,x)\dx=\int_0^\infty xf_\text{in}(x)\dx:=\rho_\text{in}.
\eeq
Moreover, it is well known that $xf(t,x)$ converges to a Dirac mass at $x=0$ when $t\to+\infty$.
Usually, the various contributions that are available in the literature restrict their attention to coefficients which satisfy the homogeneous assumptions (see~\cite{EMRR05} for instance)
\beq
\label{as:coeff1}
B(x)=x^\gamma,\quad\gamma>0,\qquad \textrm{and}\qquad b(y,x)=y^{\gamma-1}p\Bigl(\f xy\Bigr),
\eeq
where $\dmu(z):= p(z)\dz$ is a positive measure supported on $[0,1]$
which satisfies 
$$
\int_0^1z\dmu(z)=1.
$$
These hypotheses guarantee that the relation~\eqref{as:mass_cons} is verified.

From a mathematical point of view, it is convenient to perform the (mass preserving) self-similar change of variable
\[
f(t,x)=(1+t)^{2/\gamma}g\Bigl(\f1\gamma\log(1+t),(1+t)^{1/\gamma}x\Bigr),
\]
or, by writing $g$ in terms of $f$,
\[
g(t,x):=e^{-2t}f(e^{\gamma t}-1,e^{-t}x).
\]
It allows to deduce that $g(t,x)$ satisfies the \emph{self-similar fragmentation equation}
\beq\label{eq:self-sim-frag}
  \left\{\begin{array}{l}
    \partial_t\, g + \partial_x (x g) + g= \gamma\F g,\qquad t\geq0,\ x>0,
    \vspace{2mm}\\
    g(0,x) = f_\text{in}(x), \qquad x > 0.
  \end{array}\right.
\eeq
Equation~\eqref{eq:self-sim-frag} belongs to the class of \emph{growth-fragmentation equations} and it
admits -- unlike Equation~\eqref{eq:frag} -- positive steady-states~\cite{DG10,EMRR05,PR05}.

Denote by $G$ the unique positive steady-state of Equation (\ref{eq:self-sim-frag}) with normalized mass,
i.e. the solution of
\[
(xG)'+G=\gamma\mathcal F G,\qquad G>0,\qquad \int_0^\infty xG(x)\dx=1.
\]
Then it has been proved (see \cite{EMRR05,MMP2}) that the solution $g(t,x)$ of the
self-similar fragmentation equation
(\ref{eq:self-sim-frag}) converges to $\rho_\text{in}G(x)$ when $t\to+\infty.$

Coming back to the fragmentation equation (\ref{eq:frag}), this result implies the convergence of $f(t,x)$ to the self-similar solution $(t,x)\mapsto\rho_\text{in}(1+t)^{2/\gamma}G((1+t)^{1/\gamma}x)$ and hence the convergence of $xf(t,x)$ to a Dirac mass $\delta_0$.

In order to obtain more precise quantitative properties of the previous equation, 
one can wonder about the rate of convergence of $g(t,x)$ to the asymptotic profile $G(x)$.
Many recent works are dedicated to this question and prove, under different assumptions and with different techniques, an exponential rate of convergence for growth-fragmentation equations~\cite{BCG,BCGMZ13,CCM10,CCM11,Cloez,LP09,PPS2,PR05}.

Nevertheless, to our knowledge the only results about the specific case of the self-similar fragmentation equation are those provided by C\'aceres, Ca\~nizo and Mischler~\cite{CCM10,CCM11}.
They prove exponential convergence in the Hilbert space $L^2((x+x^k)\dx)$ for a sufficiently large exponent $k$ in ~\cite{CCM11}, and in the Banach space $L^1((x^m+x^M)\dx)$ for suitable exponents $1/2<m<1<M<2$ in ~\cite{CCM10}.
For proving their results, the authors of the aforementioned articles require the measure $p$ to be a bounded function 
(from above and below) and the power $\gamma$ of the fragmentation rate to be less than $2$.

The current paper aims to obtain a convergence result for super-quadratic rates, namely when $\gamma \geq 2$.
We obtain exponential convergence to the asymptotic state by working in the weighted Hilbert space $L^2(x\dx)$, under
the following assumptions:
\beq
\label{as:coeff2}
\gamma\geq2\qquad\text{and}\qquad p(z)\equiv 2.
\eeq
The fact that $p(z)$ is a constant means that the distribution of the fragments is uniform: the probability to get a fragment of size $x$ or $x'$ from a particle of size $y>x,x'$ is exactly the same.
Then the condition $\int_0^1 zp(z)\dz=1$ imposes this constant to be equal to $2$, meaning that the fragmentation is necessarily binary.
Our assumption on $p$ is more restrictive than in~\cite{CCM10,CCM11}, but in return we get a stronger result in the sense that we obtain an estimate of the exponential rate.
Now we state the main theorem of this paper.
\begin{theorem}
\label{th:expo_decay}
Let $g_{\rm in}\in L^1(x\dx)\cap L^2(x\dx)$ and let $g\in C([0,\infty),L^1(x\dx))$ be the unique solution of the self-similar fragmentation equation
$(\ref{eq:self-sim-frag})$
with initial condition $g_{\rm in}$ and with fragmentation coefficients satisfying $\eqref{as:coeff1}$ and $\eqref{as:coeff2}$,
that is
$$
B(x)=x^\gamma,\quad\gamma\geq2\qquad \textrm{and}\qquad b(y,x)= 2y^{\gamma-1}.
$$
Then the following estimate holds:
\[
\|g(t,\cdot)-\rho_{\rm in}G\|_{L^2(x\dx)}\leq\|g_{\rm in}-\rho_{\rm in}G\|_{L^2(x\dx)}\,e^{-t},\qquad t\geq0.
\]
\end{theorem}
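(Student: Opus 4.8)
\emph{Reduction to an energy inequality.} The plan is to show that the flow of \eqref{eq:self-sim-frag} contracts, at the exponential rate $e^{-t}$, the $L^2(x\dx)$-distance to the equilibrium. Put $h(t,x):=g(t,x)-\rho_{\rm in}G(x)$; by linearity of \eqref{eq:self-sim-frag} and stationarity of $G$, the function $h$ solves $\partial_t h+\partial_x(xh)+h=\gamma\F h$ with $h(0,\cdot)=g_{\rm in}-\rho_{\rm in}G\in L^1(x\dx)\cap L^2(x\dx)$. Testing the equation for $g$ against $x$, the fragmentation contribution vanishes by \eqref{as:mass_cons} while the transport and zero-order terms cancel after one integration by parts, so $\frac{d}{dt}\int_0^\infty xg(t,x)\dx=0$; since $\int_0^\infty xG\dx=1$, this gives $\int_0^\infty xh(t,x)\dx=0$ for all $t\ge0$. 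It then suffices to prove $\frac{d}{dt}\|h(t,\cdot)\|_{L^2(x\dx)}^2\le-2\|h(t,\cdot)\|_{L^2(x\dx)}^2$ and integrate with Gronwall's lemma.

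\emph{The energy balance.} Differentiating $\frac12\|h\|_{L^2(x\dx)}^2$ along the equation produces three contributions: the transport term gives $-\int_0^\infty(xh)\,\partial_x(xh)\dx=-\frac12\bigl[(xh)^2\bigr]_0^\infty=0$; the zero-order term $+h$ gives exactly $-\|h\|_{L^2(x\dx)}^2$; and the fragmentation term gives $\gamma\langle\F h,h\rangle_{L^2(x\dx)}$, with $\langle\phi,\psi\rangle_{L^2(x\dx)}:=\int_0^\infty x\phi(x)\psi(x)\dx$. Thus the exponential rate $e^{-t}$ comes entirely from the zero-order term, and everything reduces to showing that $\langle\F h,h\rangle_{L^2(x\dx)}\le0$ for every $h$ of zero mass.

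\emph{The key identity (where $p\equiv2$ and $\gamma\ge2$ enter).} With $b(y,x)=2y^{\gamma-1}$ one has $\langle\F h,h\rangle_{L^2(x\dx)}=2\int_0^\infty xh(x)\int_x^\infty y^{\gamma-1}h(y)\dy\dx-\int_0^\infty x^{\gamma+1}h(x)^2\dx$. I would substitute $u(x):=xh(x)$ and introduce the tail mass $U(x):=\int_x^\infty yh(y)\dy$, so that $U'=-u$, $U(\infty)=0$, and --- crucially --- $U(0)=0$ by the zero-mass property. Since $y^{\gamma-1}h(y)=y^{\gamma-2}u(y)$ and $\int_0^y u(z)\dz=-U(y)$, Fubini recasts the cross term as $2\int_0^\infty y^{\gamma-2}u(y)\bigl(\int_0^y u(z)\dz\bigr)\dy=-2\int_0^\infty y^{\gamma-2}u(y)U(y)\dy=\int_0^\infty y^{\gamma-2}\bigl(U^2\bigr)'(y)\dy$; one integration by parts, the boundary term $[y^{\gamma-2}U^2]_0^\infty$ vanishing at $0$ because $U(0)=0$ and $\gamma\ge2$, then gives
\[
\langle\F h,h\rangle_{L^2(x\dx)}=-(\gamma-2)\int_0^\infty x^{\gamma-3}U(x)^2\dx-\int_0^\infty x^{\gamma-1}u(x)^2\dx\ \le\ 0.
\]
Both terms are nonpositive precisely because $\gamma\ge2$; for $\gamma=2$ the first one disappears and the computation collapses to $\langle\F h,h\rangle_{L^2(x\dx)}=-\int_0^\infty x^3h(x)^2\dx$. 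Combined with the energy balance, $\frac{d}{dt}\|h\|_{L^2(x\dx)}^2\le-2\|h\|_{L^2(x\dx)}^2$, which is the desired estimate.

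\emph{The expected main obstacle.} The identity above is purely algebraic once one uses the substitution $u=xh$ and the tail mass $U$; the real difficulty is its rigorous justification, since for $g_{\rm in}$ only in $L^1(x\dx)\cap L^2(x\dx)$ the quantity $\int_0^\infty x^{\gamma+1}h^2\dx$ may be infinite and the boundary terms above need not vanish a priori. I would handle this through the regularising effect of the superlinear loss term $-x^\gamma g$ in \eqref{eq:self-sim-frag}, which forces the instantaneous appearance of all polynomial moments and rapid decay of $g(t,\cdot)$ for $t>0$, so that the formal computation is licit on $(0,\infty)$ and one lets $t\to0^+$; alternatively, one runs the energy estimate on a suitable truncation of $h$ and passes to the limit. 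Propagating just enough weighted integrability in time to legitimise the integrations by parts --- in particular to ensure $xh(t,\cdot)\to0$ at both endpoints and $x^{\gamma-2}U(x)^2\to0$ as $x\to\infty$ --- is the step I expect to cost the most work.
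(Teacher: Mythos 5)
Your proof is correct and follows essentially the same route as the paper: the identity $\langle\F h,h\rangle_{L^2(x\dx)}=-(\gamma-2)\int_0^\infty x^{\gamma-3}U^2\dx-\int_0^\infty x^{\gamma+1}h^2\dx$, obtained by Fubini and one integration by parts that exploits $\gamma\ge2$, is exactly the content of the paper's Theorem~\ref{th:spectral_gap}, with your tail mass $U$ being the negative of the paper's primitive $M$ (they coincide up to sign precisely because $h$ has zero first moment). The regularity obstruction you flag at the end is handled in the paper by first running the estimate for $g_{\rm in}$ in the smaller class $\Xi=L^1\cap\dot L^1_\gamma\cap\dot W^{1,1}_1$, where Lemma~\ref{lm:Xi} supplies the boundary decay and moment bounds that legitimise the integrations by parts, and then removing this hypothesis by density.
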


\section{Preliminaries}

Define the suitable weighted spaces
\[\dot L_k^p:=L^p(\R^+,x^k\,\dx)\quad\text{for}\ p\geq1,\ k\in\R,\qquad\text{and}\qquad\dot W_1^{1,1}:=W^{1,1}(\R^+,x\,\dx).\]
For $u\in\dot L^1_1$ we denote moreover by
\[M(x):=\int_0^x yu(y)\,\dy\]
 the primitive of $xu(x)$ which vanishes at $x=0.$

\

Now we recall the following existence and uniqueness result of a solution to the fragmentation equation, easily deduced from~\cite{EMRR05}, Theorems 3.1-3.2 and Lemma 3.4:

\begin{theoremEMRR}
If 
$$
B(x)=x^\gamma,\quad\gamma\geq2\qquad \textrm{and}\qquad b(y,x)= 2y^{\gamma-1},
$$
for any $f_{\rm in}\in \dot L^2_1\cap \dot L^1_1,$  there exists a unique solution $f\in C([0,\infty);\dot L^1_1)\cap L^1_{loc}([0,\infty);\dot L^1_{1+\gamma})$ to the fragmentation equation $(\ref{eq:frag})$ such that the mass conservation \eqref{eq:mass_cons} is satisfied.
If, moreover, $f_{\rm in}\in\Xi:= \dot L^1_{1+\gamma}\cap \dot W^{1,1}_1$, the associated solution $g$ to the self-similar fragmentation equation is such that
\[(g(t,\cdot))_{t\geq0}\ \text{is uniformly bounded in}\ \Xi.\]
\end{theoremEMRR}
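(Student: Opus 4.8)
## Proof Proposal

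The statement is essentially a translation of existing results in~\cite{EMRR05} into the notation of this paper, so the plan is to identify the precise hypotheses of their Theorems 3.1--3.2 and Lemma 3.4 and check that our coefficients $B(x)=x^\gamma$, $b(y,x)=2y^{\gamma-1}$ with $\gamma\geq 2$ satisfy them. First I would verify the structural assumptions on the fragmentation kernel: the mass-conservation relation~\eqref{as:mass_cons} holds here since $\int_0^y x\cdot 2y^{\gamma-1}\dx=y^{\gamma-1}y^2=y\cdot y^\gamma=yB(y)$; the homogeneity assumptions~\eqref{as:coeff1} hold with $p(z)\equiv 2$ and $\int_0^1 z\cdot 2\dz=1$ as required. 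Since $p$ is bounded, the integrability and regularity conditions imposed in~\cite{EMRR05} on the measure $\mu$ are trivially met. The only genuinely new point relative to the bulk of the fragmentation literature is that $\gamma\geq 2$ is allowed; I would check that the existence/uniqueness arguments of~\cite{EMRR05} (which are typically carried out for $\gamma>0$, or with the restriction $\gamma<2$ appearing only in later spectral-gap analyses, not in the well-posedness part) apply verbatim for $\gamma\geq 2$ in the space $\dot L^1_m\cap\dot L^1_1$ with $m<1$.

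For the first conclusion, existence and uniqueness of $f\in C([0,\infty);\dot L^1_1)$ satisfying mass conservation~\eqref{eq:mass_cons}, I would invoke~\cite{EMRR05}, Theorem 3.1 (existence, via a fixed-point or truncation–compactness argument on the linear semigroup generated by $\F$) and Theorem 3.2 (uniqueness, via a duality or Gronwall estimate in $\dot L^1_1$). The extra moment $\dot L^1_m$ with $m<1$ is needed to control the gain term $\int_x^\infty b(y,x)f(y)\dy=2\int_x^\infty y^{\gamma-1}f(y)\dy$ near $x=0$ and to propagate the low-order moment; mass conservation~\eqref{eq:mass_cons} then follows formally by multiplying~\eqref{eq:frag} by $x$, integrating, and using~\eqref{as:mass_cons}, the formal computation being justified by the moment bounds. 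The translation to the self-similar equation~\eqref{eq:self-sim-frag} is the explicit substitution $g(t,x)=e^{-2t}f(e^{\gamma t}-1,e^{-t}x)$ recorded in the Introduction, which is a bijection between solutions and preserves the relevant function spaces (with time-dependent but locally bounded constants).

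For the second conclusion, uniform-in-time boundedness of $(g(t,\cdot))_{t\geq 0}$ in $\Xi=L^1\cap\dot L^1_\gamma\cap\dot W^{1,1}_1$ when $g_{\rm in}\in\Xi$, I would appeal to~\cite{EMRR05}, Lemma 3.4, which provides a priori estimates on moments and on the $\dot W^{1,1}_1$-norm along the self-similar flow. The mechanism is that the drift term $\partial_x(xg)+g$ in~\eqref{eq:self-sim-frag} produces a confining effect that, combined with the dissipativity of $\F$ on high moments, yields differential inequalities of the form $\frac{d}{dt}\|g(t)\|_{\dot L^1_\gamma}\leq C_1-C_2\|g(t)\|_{\dot L^1_\gamma}$ and similarly for the $L^1$-norm and for $\partial_x g$ in $\dot L^1_1$; Gronwall then gives uniform bounds. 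The main obstacle, and the only place warranting care, is confirming that these a priori estimates from~\cite{EMRR05} are uniform in $\gamma$ over $[2,\infty)$, or at least valid for each fixed $\gamma\geq 2$: one should check that no step silently uses $\gamma<2$ (such as an interpolation that would fail, or a moment $\dot L^1_{\gamma-1}$ that needs to be finite) and that the $\dot W^{1,1}_1$ estimate, which differentiates the equation and handles the resulting $\int_x^\infty \partial_x b(y,x)\,g(y)\dy$ term, goes through with $b(y,x)=2y^{\gamma-1}$ (here $\partial_x b\equiv 0$, which in fact simplifies matters). Having checked this, the lemma applies and the statement follows.
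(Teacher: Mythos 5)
Your proposal matches the paper's treatment: the paper gives no proof of this statement, simply recording it as easily deduced from Theorems 3.1--3.2 and Lemma 3.4 of \cite{EMRR05}, and your verification that the coefficients $B(x)=x^\gamma$, $b(y,x)=2y^{\gamma-1}$ with $\gamma\geq2$ satisfy the structural hypotheses there (mass conservation \eqref{as:mass_cons}, homogeneity \eqref{as:coeff1} with $p\equiv2$, and no hidden use of $\gamma<2$ in the well-posedness and moment/$\dot W^{1,1}_1$ estimates, the restriction $\gamma<2$ belonging only to the later spectral-gap literature) is exactly the check that this deduction requires. Nothing further is needed.
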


\

In the following lemma we give some useful properties of the set 
$$
\Xi= \dot L^1_{1+\gamma}\cap \dot W^{1,1}_1
$$ 
and of the subset 
$$
\Xi_0:=\left\{u\in\Xi,\ \int_0^\infty xu(x)\,\dx=0\right\}.
$$

\begin{lemma}\label{lm:Xi}
The set $\Xi= \dot L^1_{1+\gamma}\cap \dot W^{1,1}_1$ satisfies 
\[
G\in\Xi,\qquad\Xi\subset C(0,\infty)\cap L^1\cap\dot L^2_1\cap\dot L^2_{1+\gamma}\qquad\text{and}
\]
\[
\forall u\in\Xi,\qquad\lim_{x\to0}xu(x)=\lim_{x\to+\infty}xu(x)=0.
\]
Moreover for any function $u\in\Xi_0$ the following inequality holds:
\[
\forall x>0,\qquad\left|M(x)\right|\leq x^{-\gamma}\|u\|_{\dot L_{1+\gamma}^1}.
\]
\end{lemma}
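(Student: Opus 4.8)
The plan is to establish the four assertions more or less independently, handling the inclusions by Hölder-type interpolation between the weight exponents $1$ and $\gamma$, and the vanishing-at-endpoints and the $|M(x)|$ bound by direct integration arguments exploiting $\gamma\geq1$ (resp. $\gamma\geq2$, which is in force). First I would record that $G\in\Xi$: this is precisely the regularity/integrability known for the steady state, and it follows from the explicit construction of $G$ in \cite{EMRR05} (or can be quoted from the fact, used implicitly in the \textbf{Theorem \cite{EMRR05}} statement, that the self-similar profile sits in $\Xi$ as a stationary — hence uniformly bounded — solution).

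For the inclusion $\Xi\subset\dot L^2_1\cap\dot L^2_{\gamma+1}$, the key step is the interpolation inequality: if $u\in L^1\cap\dot L^1_\gamma$ then for any $k\in[0,\gamma]$ one has $u\in\dot L^1_k$ with $\|u\|_{\dot L^1_k}\leq\|u\|_{L^1}^{1-k/\gamma}\|u\|_{\dot L^1_\gamma}^{k/\gamma}$, by writing $x^k=(x^\gamma)^{k/\gamma}\cdot 1^{1-k/\gamma}$ and applying Hölder. In particular $u\in\dot L^1_1$ (consistent with $\Xi\subset\dot L^1_1$) and $u\in\dot L^1_{\gamma+1-1}=\dot L^1_\gamma$ trivially; to upgrade from $L^1$ to $L^2$ weighted spaces I would use that $u\in\dot W^{1,1}_1$ together with the pointwise representation $xu(x)=M'(x)$-type control, or more directly: continuity of $u$ on $(0,\infty)$ (next point) plus the decay estimates give $xu(x)\to0$ at both ends, and then $\int_0^\infty x u(x)^2\dx$ and $\int_0^\infty x^{\gamma+1}u(x)^2\dx$ are finite by splitting the integral at $x=1$, bounding $u$ in $L^\infty_{loc}$ away from the ends and using the tail/near-origin decay coming from $u\in\dot L^1_\gamma\cap L^1$ and $u'\in\dot L^1_1$. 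Continuity of $u$ on $(0,\infty)$ and $\Xi\subset C(0,\infty)$ follow from $u\in\dot W^{1,1}_1$: on any compact $[a,b]\subset(0,\infty)$ the weight $x$ is bounded above and below, so $u\in W^{1,1}(a,b)\hookrightarrow C([a,b])$.

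For the endpoint limits $\lim_{x\to0}xu(x)=\lim_{x\to+\infty}xu(x)=0$: writing $(xu(x))'=u(x)+xu'(x)$, the hypothesis $u\in\dot W^{1,1}_1$ gives $xu'\in L^1(\dx)$ near the relevant endpoint, and $u\in\dot L^1_1\subset L^1_{loc}$ together with $u\in L^1$ (at infinity) gives $u\in L^1(\dx)$ on a neighbourhood of each endpoint; hence $(xu)'\in L^1$ there, so $xu(x)$ has a limit at $0$ and at $+\infty$. That limit must be $0$: at $+\infty$ because otherwise $u\notin\dot L^1_\gamma$ (or even $u\notin L^1$) for $\gamma\geq 1$; at $0$ because a nonzero limit $\ell\neq0$ would force $u(x)\sim\ell/x$, contradicting $u\in L^1_{loc}$ near $0$.

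Finally, for the bound on $M$ when $u\in\Xi_0$: since $\int_0^\infty xu(x)\dx=0$ we may write $M(x)=\int_0^x yu(y)\dy=-\int_x^\infty yu(y)\dy$. Then
\[
|M(x)|=\left|\int_x^\infty yu(y)\dy\right|\leq\int_x^\infty y\,|u(y)|\dy=\int_x^\infty y^{1-\gamma}\,y^{\gamma}|u(y)|\dy\leq x^{1-\gamma}\int_x^\infty y^{\gamma}|u(y)|\dy\leq x^{1-\gamma}\|u\|_{\dot L^1_\gamma},
\]
where the middle inequality uses $y^{1-\gamma}\leq x^{1-\gamma}$ for $y\geq x$, valid precisely because $\gamma\geq1$. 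I expect the main obstacle to be the $L^1\to L^2$ upgrade in the second assertion: one must argue carefully that membership in $\Xi$ (three $L^1$-type conditions plus one derivative condition) actually forces the squared quantities $x u^2$ and $x^{\gamma+1}u^2$ to be integrable, which requires combining the local continuity, the endpoint decay of $xu(x)$, and the weighted $L^1$ tails rather than any single hypothesis; the other three parts are short once the right representation is chosen.
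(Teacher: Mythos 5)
Your proposal is correct and in fact more complete than the paper's own proof, which only writes out two of the four assertions: membership of $G$ in $\Xi$ (via the explicit formula $G(x)=\tfrac{\gamma}{\Gamma(2/\gamma)}e^{-x^\gamma}$ from \cite{DG10}, or via the estimates in \cite{BCG}) and the $|M(x)|$ bound. For these two points you essentially reproduce the paper's reasoning, modulo a different citation for $G\in\Xi$; your $|M(x)|$ chain of inequalities is the same computation, just keeping the integral on $[x,\infty)$ instead of extending it to $(0,\infty)$ at the last step as the paper does.

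Where you go beyond the paper is in spelling out the continuity, the endpoint limits, and the $\dot L^2$ inclusions, which the paper leaves implicit. Those arguments are sound: the Sobolev embedding $W^{1,1}(a,b)\hookrightarrow C([a,b])$ gives continuity on compacts of $(0,\infty)$; then $(xu)'=u+xu'\in L^1$ near each endpoint (from $u\in L^1$ and $u\in\dot W^{1,1}_1$) forces $xu$ to have limits, which must be zero to avoid contradicting $u\in L^1$. One small tidying suggestion on the $L^1\to L^2$ upgrade, which you flag as the main obstacle: once $xu$ is continuous on $(0,\infty)$ with zero limits at both ends, it is globally bounded, say $\|xu\|_\infty\leq C$. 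Then
\[
\int_0^\infty x\,u^2\dx=\int_0^\infty (xu)\,u\dx\leq C\|u\|_{L^1},\qquad
\int_0^\infty x^{\gamma+1}u^2\dx=\int_0^\infty (xu)\,x^\gamma u\dx\leq C\|u\|_{\dot L^1_\gamma},
\]
which is cleaner than splitting at $x=1$ and avoids the interpolation detour with which you began (that interpolation inequality is true but not what you end up using). With that simplification your argument closes all four claims; the paper's proof, as written, leaves the reader to supply exactly what you worked out.
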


\begin{proof}
The fact that the steady-state $G$ belongs to $\Xi$ is a consequence of the estimates in~\cite{BCG}. In the case when $p(z)\equiv2$ it can also be deduced from the explicit formula (see~\cite{DG10})
\[
G(x)=\f{\gamma}{\Gamma(2/\gamma)}\,e^{-x^\gamma}
\]
where $\Gamma$ is the Euler Gamma function.

\

For $\gamma\geq0$ and $u\in\Xi$ such that $\int_0^\infty xu(x)\,\dx=0$ we can write for $x>0$
\[
|M(x)|=\left|-\int_x^\infty yu(y)\,\dy\right|\leq x^{-\gamma}\int_0^\infty y^{1+\gamma} |u(y)|\,\dy.
\]
\end{proof}

\

\section{Proof of the main theorem}

Define the self-similar fragmentation operator $\mathcal L u := -(x u)' -  u + \gamma\F u$ and denote by
\[
(u,v):=\int_0^\infty xu(x)v(x)\,\dx
\]
the canonical scalar product in $\dot L_1^2.$
Theorem~\ref{th:expo_decay} is a consequence of the following result.

\begin{theorem}\label{th:spectral_gap}
Under Assumptions \eqref{as:coeff1} and~\eqref{as:coeff2}, i.e.
for
$$
B(x)=x^\gamma,\quad\gamma\geq2\qquad \textrm{and}\qquad b(y,x)= 2y^{\gamma-1},
$$
we have
\[
\forall u\in\Xi_0,\qquad (u,\mathcal L u)\leq-\|u\|^2_{\dot L_1^2}.
\]
\end{theorem}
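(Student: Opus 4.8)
The plan is to compute the bilinear form $(u,\mathcal L u)$ explicitly for the special coefficients $B(x)=x^\gamma$, $b(y,x)=2y^{\gamma-1}$, and then estimate each piece. Writing $\mathcal L u = -(xu)' - u + \gamma\F u$ with $\F u(x) = 2\int_x^\infty y^{\gamma-1}u(y)\,\dy - x^\gamma u(x)$, I would split
\[
(u,\mathcal L u) = -\int_0^\infty x u\,(xu)'\,\dx - \int_0^\infty x u^2\,\dx + \gamma\int_0^\infty x u(x)\Bigl(2\int_x^\infty y^{\gamma-1}u(y)\,\dy - x^\gamma u(x)\Bigr)\dx.
\]
The first (transport) term integrates by parts: $-\int_0^\infty xu\,(xu)'\,\dx = -\tfrac12\int_0^\infty \bigl((xu)^2\bigr)'\,\dx = 0$ thanks to the boundary vanishing $xu(x)\to0$ at $0$ and $\infty$ granted by Lemma~\ref{lm:Xi}. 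So the transport term contributes nothing and we are left with $-\|u\|_{\dot L^2_1}^2 + \gamma\,Q(u)$, where $Q(u):=2\int_0^\infty xu(x)\int_x^\infty y^{\gamma-1}u(y)\,\dy\,\dx - \int_0^\infty x^{\gamma+1}u(x)^2\,\dx$. The theorem then reduces to showing $Q(u)\le 0$ for all $u\in\Xi_0$.

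The key idea for handling $Q$ is to rewrite the double integral in terms of the primitive $M(x)=\int_0^x yu(y)\,\dy$. Since $u\in\Xi_0$ means $\int_0^\infty yu(y)\,\dy = 0$, we have $\int_x^\infty y^{\gamma-1}u(y)\,\dy$ expressible via integration by parts against $M$: indeed $\int_x^\infty y^{\gamma-2}\cdot yu(y)\,\dy = -x^{\gamma-2}M(x) + (\gamma-2)\int_x^\infty y^{\gamma-3}M(y)\,\dy$ using $M(x)=-\int_x^\infty yu(y)\,\dy$ and $M(\infty)=0$ (valid for $\gamma\ge2$, with the $\gamma=2$ case collapsing cleanly). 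Substituting this into $Q$ and also writing $xu(x)\,\dx = \dM$, I expect $Q$ to become a quadratic form purely in $M$ of the shape $-\int_0^\infty \bigl(\text{something}\bigr)M(x)^2\,\dx + \int\int(\cdots)$ after a further integration by parts — the point being that $-\int x^{\gamma+1}u^2\,\dx$ should dominate. Concretely I would aim to show, after all integrations by parts, that $\gamma Q(u) = -\int_0^\infty w(x)\,\bigl(xu(x) - c\,x^{-1}M(x)\bigr)^2\,\dx$ for suitable weight $w\ge0$ and constant $c$, or at least a sum of manifestly nonpositive terms; the combinatorial bookkeeping of the $\gamma$-dependent constants is where the real work lies, and the hypothesis $\gamma\ge2$ should be exactly what makes the relevant coefficients have the right sign.

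The main obstacle is therefore the algebra of reorganizing $Q(u)$ into a sign-definite expression: one must choose the right order of integrations by parts (integrating against $M$ in the inner variable, then against $M$ again in the outer variable) and track the boundary terms, which vanish by the decay statements in Lemma~\ref{lm:Xi} (in particular $M(x)=O(x^{1-\gamma})$ from that lemma controls the behavior at infinity, and $M(x)=o(x^2)$-type behavior at $0$ kills the lower endpoint). A secondary point to be careful about is that the manipulations are a priori justified only on the dense class $\Xi_0$, which is why the statement is phrased there; the passage to general $L^2(x\,\dx)$ data is deferred to the deduction of Theorem~\ref{th:expo_decay} via a density/semigroup argument. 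Once $(u,\mathcal L u)\le -\|u\|_{\dot L^2_1}^2$ is established on $\Xi_0$, Theorem~\ref{th:expo_decay} follows by applying it to $u(t)=g(t,\cdot)-\rho_{\rm in}G$ (which lies in $\Xi_0$ by mass conservation and the uniform-in-$\Xi$ bound from the recalled theorem) and Grönwall's lemma on $\tfrac{d}{dt}\|u(t)\|_{\dot L^2_1}^2 = 2(u(t),\mathcal L u(t))$.
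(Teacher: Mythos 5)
Your overall strategy — drop the transport term by integration by parts, reduce to a sign condition on the fragmentation quadratic form $Q(u)$, and turn that into a statement about $M(x)=\int_0^x yu(y)\,\dy$ using the condition $\int_0^\infty yu\,\dy=0$ together with $\gamma\ge2$ — is exactly the paper's strategy, and your deduction of Theorem~\ref{th:expo_decay} from Theorem~\ref{th:spectral_gap} via Gr\"onwall is also right. But the central step, showing $Q(u)\le 0$, is asserted rather than proved in your write-up: you say you ``expect'' a sign-definite reorganization and that the hypothesis $\gamma\ge2$ ``should be'' what makes the constants work, but you never produce the inequality. That is precisely where the content lies, so as it stands the proposal has a genuine gap.

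Two concrete issues. First, a sign error: integrating $\int_x^\infty y^{\gamma-2}\,dM(y)$ by parts gives
\[
\int_x^\infty y^{\gamma-1}u(y)\,\dy \;=\; -x^{\gamma-2}M(x)\;-\;(\gamma-2)\int_x^\infty y^{\gamma-3}M(y)\,\dy,
\]
with a \emph{minus} on the second term, not a plus. If you carry your plan through with this corrected sign and then apply Fubini once more to the remaining double integral, you do land on the right answer, but you are taking a needlessly long route. Second, the quadratic form does not need to be massaged into a sum of squares of the shape $\bigl(xu - c x^{-1}M\bigr)^2$; it is already a sum of two manifestly nonpositive pieces. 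The paper's shortcut is: apply Fubini \emph{first} to get $2\int_0^\infty x^{\gamma-1}u(x)M(x)\,\dx$, then observe $x^{\gamma-1}u(x)=x^{\gamma-2}M'(x)$, so the cross term is $\int_0^\infty x^{\gamma-2}\bigl(M^2\bigr)'(x)\,\dx$ which a single integration by parts (boundary terms vanish by the decay $|M(x)|\le x^{1-\gamma}\|u\|_{\dot L^1_\gamma}$ from Lemma~\ref{lm:Xi}) turns into $-(\gamma-2)\int_0^\infty x^{\gamma-3}M^2(x)\,\dx$. Then
\[
(u,\mathcal F u)=-(\gamma-2)\int_0^\infty x^{\gamma-3}M^2(x)\,\dx-\int_0^\infty x^{\gamma+1}u^2(x)\,\dx\le0
\]
for $\gamma\ge2$, and $(u,\mathcal L u)=-\|u\|^2_{\dot L^2_1}+\gamma(u,\mathcal F u)\le-\|u\|^2_{\dot L^2_1}$. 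So you had the right ingredients; the missing move was to do the Fubini swap before touching the inner integral, which collapses the whole computation to one integration by parts.
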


\begin{proof}
Using Lemma~\ref{lm:Xi} we can deduce, for $u\in\Xi_0$,
\[
(u,(x u)')=\int_0^\infty xu(x)\bigl(xu(x)\bigr)'\dx=
\frac{1}{2}\int_0^\infty \bigl((xu(x))^2\bigr)'\dx=0
\]
and
\begin{align*}
(u,\mathcal F u)&=2\int_0^\infty xu(x) \int_x^\infty y^{\gamma-1}u(y)\,\dy\,\dx-\int_0^\infty x^{\gamma+1}u^2(x)\,\dx\\
&=2\int_0^\infty x^{\gamma-1}u(x)\int_0^x yu(y)\,\dy\,\dx-\int_0^\infty x^{\gamma+1}u^2(x)\,\dx\\
&=2\int_0^\infty x^{\gamma-2}M'(x)M(x)\,\dx-\int_0^\infty x^{\gamma+1}u^2(x)\,\dx\\
&=-(\gamma-2)\int_0^\infty x^{\gamma-3}M^2(x)\,\dx-\int_0^\infty x^{\gamma+1}u^2(x)\,\dx\leq0.
\end{align*}
\end{proof}

\begin{proof}[Proof of Theorem~\ref{th:expo_decay}]
Assume first that $g_{\rm in}\in\Xi.$
From Theorem~\ref{th:spectral_gap} we obtain the differential inequality
\[\f{d}{dt}\|g(t,\cdot)-\rho_\text{in}G\|_{\dot L_1^2}\leq-\|g(t,\cdot)-\rho_\text{in}G\|_{\dot L_1^2}\]
which gives the result.
Then we may remove the additional assumption $g_\text{in}\in\Xi.$
\end{proof}

\

\section{Conclusion}

We have proved a spectral gap result for the self-similar fragmentation operator $\mathcal L$ with a superquadratic fragmentation rate $B(x).$
More precisely we have obtained that this spectral gap is larger than $1.$
This is a new result concerning the long-time behaviour of the fragmentation equation~\eqref{eq:frag}.
It also allows to extend the results obtained in~\cite{PG12} for non-linear growth-fragmentation equations.

\section*{Acknowledgement}
This work has been performed in the framework of the ANR project KIBORD, headed by Laurent Desvillettes.

%
%

\end{document}